\newtheorem{theorem}{Theorem}
\newtheorem{lemma}[theorem]{Lemma}
\newtheorem{corollary}[theorem]{Corollary}
\theoremstyle{remark}
\theoremstyle{definition}
\newtheorem{definition}[theorem]{Definition}
\newcommand{\oo}{\infty}
\newcommand{\R}{\mathbb{R}}
\newcommand{\PHI}{\varphi}
\newcommand{\DGM}[1]{(\text{Dgm}_{#1}, w_{#1})}
\newcommand{\HP}{\mathbb{R}_<^2}
\DeclareMathOperator{\ID}{id}
\DeclareMathOperator{\C}{cost}
\begin{document}

\title{Nonembeddability of Persistence Diagrams with $p>2$ Wasserstein Metric}

\author{Alexander Wagner}
\address{Department of Mathematics, University of Florida}
\email{wagnera@ufl.edu}
\urladdr{https://people.clas.ufl.edu/wagnera/}

\maketitle

\begin{abstract}
Persistence diagrams do not admit an inner product structure compatible with any Wasserstein metric. Hence, when applying kernel methods to persistence diagrams, the underlying feature map necessarily causes distortion. We prove persistence diagrams with the $p$-Wasserstein metric do not admit a coarse embedding into a Hilbert space when $p > 2$.
\end{abstract}

\section{Introduction}
\label{sec:intro}

The space of persistence diagrams with the $p$-Wasserstein metric is known not to admit an isometry into a Hilbert space for any $1 \leq p \leq \oo$ \citep{2019arXiv190301051T}. However, when applying kernel methods to persistence diagrams, the space of persistence diagrams is implicitly being mapped into a Hilbert space. Because of this, trying to lower-bound the non-trivial distortion caused by Hilbert space embeddings of persistence diagrams is of interest. \citet{MR3968607} investigated the question of bi-Lipschitz embeddings of persistence diagrams into separable Hilbert spaces. Bi-Lipschitz embeddings require that the metric in the domain is distorted at most linearly by the embedding. In contrast, coarse embeddings only require that the metric be distorted uniformly, but perhaps even super-exponentially. \citet{2019arXiv190505604B} proved that persistence diagrams with the $p$-Wasserstein metric do not admit a coarse embedding into a Hilbert space when $p = \oo$. The purpose of this paper is to extend this result to $p > 2$.

\section{Background}
\label{sec:background}

\subsection{The space of persistence diagrams}
\label{subsec:spaceofpd}
Persistence diagrams naturally arise as the output of persistent homology, which describes the changing homology of a one-parameter family of topological spaces. 

\begin{definition}
Denote $\{(x,y) \in \R^2\ |\ x < y\}$ by $\HP$. A \emph{persistence diagram} is a function from a countable set to $\HP$, i.e. $D: I \to \HP$.
\end{definition}

\begin{definition}
Suppose $D_1: I_1 \to \HP$ and $D_2: I_2 \to \HP$ are persistence diagrams. A \emph{partial matching} between them is a triple $(I_1', I_2', f)$ such that $I_1' \subseteq I_1$, $I_2' \subseteq I_2$, and $f: I_1' \to I_2'$ is a bijection. The \emph{$p$-cost of $f$} is denoted $\C_p(f)$ and defined as follows for $p < \oo$.
\[
\C_p(f) = \left( \sum_{i \in I_1'} \|D_1(i) - D_2(f(i))\|^p_{\oo} + \sum_{i \in I_1 \setminus I_1'} \left ( \frac{D_1(i)_y - D_1(i)_x}{2} \right)^p + \sum_{i \in I_2 \setminus I_2'} \left( \frac{D_2(i)_y - D_2(i)_x}{2} \right)^p\right)^{1/p}
 \]
That is, the $p$-cost of a partial matching is the $\ell^p$ norm of the sequence of distances between points paired by the partial matching and unpaired points with the diagonal in $\R^2$.
\end{definition}

\begin{definition}
Let $1\leq p < \oo$. If $D_1$, $D_2$ are persistence diagrams, define 
\[
\tilde{w}_p(D_1, D_2) = \inf \{ \C_p(f)\ |\ \text{f is a partial matching between $D_1$ and $D_2$} \}.
\]
Let $\DGM{p}$ denote the metric space of persistence diagrams $D$ that satisfy $\tilde{w}_p(D,\emptyset)< \oo$ modulo the relation $D_1 \sim D_2$ if $\tilde{w}_p(D_1, D_2) = 0$, where $\emptyset$ is shorthand for the unique persistence diagram with empty indexing set.
\end{definition}

\subsection{Coarse embeddings}
\label{subsec:coarseembedding}

Coarse embeddings are maps between metric spaces that allow distances to be distorted by amounts controlled by some fixed functions.

\begin{definition}
	A map $f:(X,d) \to (Y, d')$ is a \emph{coarse embedding} if there exists non-decreasing $\rho_-, \rho_+ :[0, \oo) \to [0, \oo)$ such that
	\begin{enumerate}
		\item $\rho_-(d(x,y)) \leq d'(f(x), f(y)) \leq \rho_+(d(x,y))$ for all $x,y \in X$, and
		\item $\lim \limits_{t \to \oo} \rho_-(t) = \oo$.
	\end{enumerate}
\end{definition}

\citet{MR2146202} proved that a metric space can be coarsely embedded into a Hilbert space iff every finite subset can be embedded in $\ell_2$ with the same distortion functions. \citet{MR2196037} subsequently gave a sufficient condition for a metric space to not admit a coarse embedding into a Hilbert space; this condition is satisfied in particular by $\ell_p$ for $p > 2$.

\begin{theorem}[{\citealt[Theorem 3.4]{MR2146202}}]
\label{thm:nowak}
A metric space $(X, d)$ admits a coarse embedding into a Hilbert space if and only if there exist non-decreasing functions $\rho_-, \rho_+: [0, \oo) \to [0, \oo)$ such that $\lim \limits_{t \to \oo} \rho_-(t) = \oo$ and for every finite subset $A \subseteq X$ there exists a map $f_A: A \to \ell_2$ satisfying
\[
\rho_-(d(x,y)) \leq \|f_A(x) - f_A(y) \|_2 \leq \rho_+(d(x,y))
\]
for every $x, y \in A$.
\end{theorem}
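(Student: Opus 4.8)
The plan is to reduce both directions to the classical correspondence, due to Schoenberg, between maps into a Hilbert space and kernels of negative type: a symmetric function $N : X \times X \to [0, \oo)$ with $N(x,x) = 0$ can be written as $N(x,y) = \|g(x) - g(y)\|_2^2$ for some map $g$ into a Hilbert space if and only if $N$ is conditionally negative definite, i.e.\ $\sum_{i,j} c_i c_j N(x_i, x_j) \leq 0$ whenever $\sum_i c_i = 0$. Under this dictionary, a map into a Hilbert space with control functions $\rho_-, \rho_+$ corresponds exactly to a conditionally negative definite kernel satisfying $\rho_-(d(x,y))^2 \leq N(x,y) \leq \rho_+(d(x,y))^2$. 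The forward implication of the theorem is then immediate: given a coarse embedding $f : X \to H$, restrict to a finite $A \subseteq X$, observe that $f(A)$ spans a finite-dimensional subspace that embeds isometrically into $\ell_2$, and take $f_A$ to be the composite. The distances are unchanged, so the same $\rho_\pm$ serve; note also that the hypothesis forces $\rho_-(0) = 0$, since $\rho_-(0) \leq \|f_A(x) - f_A(x)\|_2 = 0$.

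For the reverse implication I would glue the finite models into a single global kernel by a compactness argument. Translating each finite map $f_A$ into $N_A(x,y) = \|f_A(x) - f_A(y)\|_2^2$ produces, for every finite $A \subseteq X$, a conditionally negative definite kernel on $A$ whose values lie in the prescribed intervals. Consider the product space $K = \prod [\rho_-(d(x,y))^2,\ \rho_+(d(x,y))^2]$, indexed over the two-element subsets $\{x,y\}$ of $X$; each factor is a compact interval, so $K$ is compact by Tychonoff's theorem, and a point of $K$ is precisely a symmetric kernel, vanishing on the diagonal, obeying the two-sided bound. For each finite $A$ let $C_A \subseteq K$ be the set of kernels whose restriction to $A \times A$ is conditionally negative definite.

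The three properties needed for a finite-intersection argument hold. Each $C_A$ is \emph{closed}, because conditional negative definiteness on the finite set $A$ is the conjunction of the conditions $\sum_{i,j} c_i c_j N(x_i, x_j) \leq 0$, each a closed condition continuous in the product topology. Each $C_A$ is \emph{nonempty}, since $N_A$ extends to a point of $K$ by assigning arbitrary admissible values off $A \times A$. Finally $\{C_A\}$ has the \emph{finite intersection property}: for finitely many finite sets, their union $A$ is finite and $C_A \subseteq \bigcap_i C_{A_i}$, because a restriction of a conditionally negative definite kernel is again conditionally negative definite. Compactness of $K$ then yields some $N \in \bigcap_A C_A$. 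As conditional negative definiteness only involves finitely many points at a time, $N$ is conditionally negative definite on all of $X$, and it obeys the bounds by construction; Schoenberg's realization writes $N(x,y) = \|f(x) - f(y)\|^2$ for a map $f$ into a Hilbert space, and taking square roots of the bounds exhibits $f$ as the desired coarse embedding.

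\textbf{Main obstacle.} The crux is the gluing step, and the technical care lies in checking that conditional negative definiteness is both a \emph{closed} condition (so the $C_A$ are compact) and a \emph{finitary} one (so that negative definiteness on every finite subset upgrades to negative definiteness on $X$); with these in hand, Tychonoff compactness does the work, and the realization step is classical. One minor point to flag is that Schoenberg's construction may land in a non-separable Hilbert space, which is harmless: the statement only asks for \emph{a} Hilbert space on the embedding side, while the uniform control is witnessed by the finite models in $\ell_2$.
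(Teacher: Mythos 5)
The paper does not prove this statement; it is quoted verbatim from Nowak with a citation, so there is no internal proof to compare against. Your argument is correct and is essentially the standard one (and close in spirit to Nowak's original, which likewise passes through kernels of negative type): the forward direction by restriction to a finite-dimensional subspace of the target, and the reverse direction by converting each finite model $f_A$ into a conditionally negative definite kernel obeying $\rho_-(d(x,y))^2 \leq N_A(x,y) \leq \rho_+(d(x,y))^2$, gluing via Tychonoff compactness of the product of intervals together with the finite intersection property, and realizing the limiting kernel by Schoenberg. The points you flag — closedness and finitariness of conditional negative definiteness, nonemptiness of each interval (guaranteed by applying the hypothesis to two-point subsets), $\rho_-(0)=0$, and the harmlessness of a possibly non-separable target — are exactly the right ones and are all handled correctly.
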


\begin{definition}
A basis $(e_n)_n$ for a Banach space $X$ is a normalized symmetric basis if 
\[
\| \sum \limits_{n} \theta_n a_n e_{\sigma(n)} \| = \| \sum \limits_{n} a_n e_n \|
\]for any choices of signs $\theta_n \in \{-1, +1\}$, permutation $\sigma : \mathbb{N} \to \mathbb{N}$	, and $\sum \limits_n a_n e_n \in X$.
\end{definition}

\begin{theorem}[{\citealt[Theorem 1]{MR2196037}}]
\label{thm:cecond}
Suppose that a Banach space $X$ has a normalized symmetric basis $(e_n)_n$ and that $\liminf \limits_{n \to \oo} n^{-\frac{1}{2}} \left \|\sum \limits_{i=1}^{n} e_i \right \| = 0$. Then $X$ does not coarsely embed into a Hilbert space.
\end{theorem}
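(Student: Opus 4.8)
The plan is to assume, for contradiction, that the space $X$ carrying the symmetric basis $(e_n)_n$ coarsely embeds into $\ell_2$, and to exhibit pairs of points whose $X$-distance tends to infinity while the distance between their images stays bounded; this contradicts $\lim_{t\to\oo}\rho_-(t)=\oo$. By Theorem~\ref{thm:nowak} it suffices to work with finite subsets of $X$ carrying a single pair of distortion functions $\rho_\pm$, so I am free to test the embedding on any convenient finite configuration. The naive obstruction---applying the parallelogram-law (Enflo-type) inequality on a Hamming cube built from the $e_i$---is too weak here: that inequality is consistent with power-type distortion $\rho_\pm(t)\asymp t^{p/2}$, which does \emph{not} contradict $\rho_-\to\oo$. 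What is needed instead is an inequality running in the opposite (\emph{cotype}) direction, exploiting that $\ell_2$ has cotype $2$.

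Concretely, I would invoke the metric cotype $2$ inequality for Hilbert space: there is a constant $C$ so that for each $n$ one may choose an even integer $m\asymp\sqrt n$ with
\[
\sum_{j=1}^{n}\mathbb{E}_x\left\|g\!\left(x+\tfrac m2 e_j\right)-g(x)\right\|_2^2\le C m^2\,\mathbb{E}_{x,\varepsilon}\left\|g(x+\varepsilon)-g(x)\right\|_2^2
\]
for every $g:\Z_m^n\to\ell_2$, where $x$ is uniform on $\Z_m^n$ and $\varepsilon$ is uniform on $\{-1,1\}^n$. The heart of the argument is to realize the discrete torus $\Z_m^n$ inside $X$ so that the two sides of this inequality see very different scales. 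For each coordinate I would use a scaled regular $m$-gon in the plane spanned by a fresh pair of basis vectors, giving a \emph{periodic} map $W_n:\Z_m^n\to X$ (periodicity is essential, since otherwise the torus identifications create uncontrolled long ``wrap'' edges). Writing $\lambda(k)=\|\sum_{i=1}^k e_i\|$, the symmetry of the basis makes the full edge $x\mapsto x+\varepsilon$ have $X$-length $\asymp\kappa_n\lambda(n)$ and each cotype displacement $x\mapsto x+\tfrac m2 e_j$ have $X$-length $\asymp\kappa_n m$, where $\kappa_n$ is the scaling factor. I would fix $\kappa_n\asymp s_0/\lambda(n)$ so that every edge has $X$-length a \emph{constant} $s_0$; the displacements then have $X$-length $\asymp s_0\,\sqrt n/\lambda(n)$.

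With $g=f_n\circ W_n$, where $f_n$ is the map supplied by Theorem~\ref{thm:nowak} on the finite set $W_n(\Z_m^n)$, I would bound each displacement term below by $\rho_-(c\,s_0\sqrt n/\lambda(n))^2$ and each edge term above by $\rho_+(s_0)^2$. Crucially, because the edges sit at the single fixed scale $s_0$, the quantity $\rho_+(s_0)$ is a constant independent of $n$---this is exactly the point at which the argument must respect that coarse embeddings impose no control on $\rho_+$ at large scales. After the factor $m^2\asymp n$ cancels the sum of $n$ displacement terms, the metric cotype inequality collapses to
\[
\rho_-\!\left(c\,s_0\,\frac{\sqrt n}{\lambda(n)}\right)^2\le C\,\rho_+(s_0)^2 .
\]
Now I would invoke the hypothesis $\liminf_{n}n^{-1/2}\lambda(n)=0$ to pass to a subsequence along which $\sqrt n/\lambda(n)\to\oo$; along it the left-hand argument diverges while the right-hand side is bounded, forcing $\rho_-$ to remain bounded on an unbounded set of inputs and contradicting $\rho_-\to\oo$.

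The main obstacle is the metric cotype inequality itself, which is a deep theorem rather than a routine estimate, together with the care needed to build the periodic embedding $W_n$ and to verify the two scale computations through the symmetric-basis identity $\|\sum_{i\in S}\pm e_i\|=\lambda(|S|)$. It is worth noting that one can avoid citing metric cotype by instead averaging the squared embedding kernel over translations and signed coordinate permutations to produce an invariant negative-definite function on $X$ and analyzing it by Fourier expansion on $\Z_m^n$; this more self-contained route essentially re-derives the needed cotype-type inequality by hand in the present highly symmetric situation, at the cost of a longer harmonic-analysis computation.
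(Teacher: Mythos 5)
The paper does not actually prove this statement: Theorem~\ref{thm:cecond} is imported verbatim from Johnson and Randrianarivony, so the only meaningful comparison is with their original argument. That argument runs through negative definite kernels: one first uses the metric convexity of $X$ to replace $\rho_+$ by an affine function, then (via the Aharoni--Maurey--Mityagin/Schoenberg correspondence and an invariant mean over translations, sign changes, and permutations) produces a symmetric, invariant negative definite function $\psi$ on the group generated by the basis with $\rho_-(\|x\|)^2\le\psi(x)\le K(1+\|x\|)^2$, and finally shows that invariance forces $\psi(\sum_{i=1}^n e_i)\gtrsim n$, contradicting the upper bound $K(1+\|\sum_{i=1}^n e_i\|)^2=o(n)$ along the subsequence where $n^{-1/2}\|\sum_{i=1}^n e_i\|\to 0$. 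Your route is genuinely different and, as far as I can check, sound: you replace the negative-definite-function machinery by the Mendel--Naor metric cotype $2$ inequality for Hilbert-space targets with the sharp scaling $m\asymp\sqrt n$, tested against a torus $\Z_m^n$ realized in $X$ by scaled $m$-gons in disjoint pairs of basis coordinates, with the scales pinned down by the $1$-unconditional ``squeeze'' estimate $\min_i|a_i|\,\lambda(|S|)\le\|\sum_{i\in S}a_ie_i\|\le\max_i|a_i|\,\lambda(|S|)$. Two things your approach buys: it needs no reduction to an affine $\rho_+$ (only the single value $\rho_+(s_0)$ ever appears, as you correctly emphasize), and for the Hilbert target the cotype inequality is not really a black box --- it follows from a one-page Fourier computation on $\Z_m^n$, valid for every even $m\ge c\sqrt n$ with a universal constant, which also disposes of the worry about which $m$ the general Mendel--Naor theorem supplies. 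What it costs is exactly what you say: either citing a theorem heavier than the one being proved, or carrying out that harmonic-analysis computation, which is essentially a rediscovery of the Schoenberg-side mechanics of the original proof. The only slips are cosmetic: your stated edge length $\asymp\kappa_n\lambda(n)$ and displacement length $\asymp\kappa_n m$ are consistent only if $\kappa_n$ denotes the side length (not the radius) of the $m$-gon, and the lower bound on the edge length is never needed --- only the upper bound at scale $s_0$ and the lower bound on the half-period displacements enter the final inequality. Neither affects the conclusion.
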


\begin{corollary}[\citealt{MR2196037}]
\label{cor:lp>2}
	The space $\ell_p$ does not admit a coarse embedding into a Hilbert space for $p > 2$.
\end{corollary}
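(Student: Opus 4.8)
The plan is to apply Theorem \ref{thm:cecond} directly to $\ell_p$, so the entire task reduces to verifying its two hypotheses for the standard unit vector basis. First I would take $(e_n)_n$ to be the sequence of coordinate vectors $e_n = (0, \ldots, 0, 1, 0, \ldots)$, with the $1$ in the $n$-th slot. Each has $\ell_p$-norm equal to $1$, so the basis is normalized, and it is a Schauder basis for $\ell_p$ in the usual way. The symmetry condition is immediate from the formula for the $\ell_p$-norm: for any signs $\theta_n \in \{-1,+1\}$, any permutation $\sigma$, and any $\sum_n a_n e_n \in \ell_p$, one has
\[
\left\| \sum_n \theta_n a_n e_{\sigma(n)} \right\|_p = \left( \sum_n |\theta_n a_n|^p \right)^{1/p} = \left( \sum_n |a_n|^p \right)^{1/p} = \left\| \sum_n a_n e_n \right\|_p,
\]
since $|\theta_n| = 1$ and reindexing by $\sigma$ merely permutes the terms of an absolutely convergent series. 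Thus $(e_n)_n$ is a normalized symmetric basis.

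The second step is to evaluate the quantity appearing in the $\liminf$. For each $n$, the partial sum $\sum_{i=1}^n e_i$ is the vector with $n$ entries equal to $1$ and the rest $0$, so $\left\| \sum_{i=1}^n e_i \right\|_p = n^{1/p}$. Hence
\[
n^{-\frac{1}{2}} \left\| \sum_{i=1}^n e_i \right\|_p = n^{\frac{1}{p} - \frac{1}{2}}.
\]
When $p > 2$ we have $\frac{1}{p} < \frac{1}{2}$, so the exponent $\frac{1}{p} - \frac{1}{2}$ is strictly negative, and therefore $n^{\frac{1}{p} - \frac{1}{2}} \to 0$ as $n \to \oo$. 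In particular the $\liminf$ is $0$, so the hypothesis $\liminf_{n \to \oo} n^{-\frac{1}{2}} \left\| \sum_{i=1}^n e_i \right\| = 0$ of Theorem \ref{thm:cecond} holds.

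With both hypotheses verified, Theorem \ref{thm:cecond} applies and yields directly that $\ell_p$ does not coarsely embed into a Hilbert space for $p > 2$, which is the claim. I do not anticipate a genuine obstacle here: the result is a one-line consequence of the cited theorem once the norm computation $\left\| \sum_{i=1}^n e_i \right\|_p = n^{1/p}$ is recorded. The only point requiring a small amount of care is making sure the symmetric-basis identity is stated for the actual definition given in the excerpt (invariance under both sign changes and permutations simultaneously), but this is transparent for $\ell_p$ precisely because the norm depends only on the multiset of absolute values of the coordinates.
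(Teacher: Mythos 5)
Your proof is correct and is exactly the intended derivation: the paper states Corollary \ref{cor:lp>2} as an immediate consequence of Theorem \ref{thm:cecond} applied to the standard unit vector basis of $\ell_p$, and your verification that this basis is normalized symmetric together with the computation $n^{-1/2}\bigl\|\sum_{i=1}^n e_i\bigr\|_p = n^{1/p - 1/2} \to 0$ for $p > 2$ is precisely that argument.
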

\section{Coarse embeddability of diagrams with $w_{p > 2}$ metric}
\label{sec:main}

In this section, we will show $\DGM{p}$ does not coarsely embed into a Hilbert space for $p > 2$. As in the case when $p = \oo$, the proof relies on the fact that $w_p$ is the infimum of the $\ell_p$ norm over all partial matchings.

\begin{lemma}
\label{lem:iso}
Let $d \in \mathbb{N}$. Every finite subset of $(\R^d, \| \cdot \|_p)$ isometrically embeds into $\DGM{p}$.
\end{lemma}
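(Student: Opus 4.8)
The plan is to encode the $d$ coordinates of a point $v \in \R^d$ as the $d$ off-diagonal points of a single persistence diagram, placing each point far from the diagonal and far from the points encoding the other coordinates, so that the only competitive partial matching is the coordinatewise one.

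Fix a finite set $\{v_1, \dots, v_N\} \subseteq (\R^d, \|\cdot\|_p)$ and write $v_k = (v_k^1, \dots, v_k^d)$. Set $B = \max_{k,j} |v_k^j|$, and choose constants $C, M$ with $M > B$ (to keep all points inside $\HP$) and both large relative to $\max_{k,l}\|v_k - v_l\|_p$; the precise size is pinned down at the end. I would define $\PHI(v_k) = D_k$ to be the diagram with index set $\{1, \dots, d\}$ and
\[
D_k(j) = (jC,\, jC + M + v_k^j).
\]
Each $D_k$ has finitely many points, so $\tilde{w}_p(D_k, \emptyset) < \oo$ and $D_k \in \DGM{p}$, and distinct $v_k$ yield diagrams at positive distance, so $\PHI$ descends to a well-defined injection into the quotient space.

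First I would record the two distance computations that drive the argument. For the coordinatewise matching $\ID$ (pairing the $j$-th point of $D_k$ with the $j$-th point of $D_l$), the paired points differ only in their second coordinate, so $\|(0,\, v_k^j - v_l^j)\|_\oo = |v_k^j - v_l^j|$ and hence $\C_p(\ID) = \|v_k - v_l\|_p$. On the other hand, two points with distinct indices $j \ne j'$ differ in their first coordinate by at least $C$, so any cross-match costs at least $C$; and any unmatched point contributes at least $(M - B)/2$ to the cost, since each point has persistence at least $M - B$.

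The main step, and the only real obstacle, is to show the coordinatewise matching is optimal. I would argue that any partial matching other than $\ID$ either leaves a point unmatched (cost $\geq (M-B)/2$) or matches all points via a non-identity bijection of $\{1, \dots, d\}$, which forces at least one cross-match (cost $\geq C$). Both lower bounds exceed $\max_{k,l}\|v_k - v_l\|_p$ once $C$ and $M$ are chosen large enough, and since $B$ and $\max_{k,l}\|v_k - v_l\|_p$ are finite for a fixed finite set, a single such choice works uniformly over all pairs. Therefore $\tilde{w}_p(D_k, D_l) = \C_p(\ID) = \|v_k - v_l\|_p$, so $\PHI$ is an isometry onto its image, completing the proof. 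The care required is only to make the constant selection uniform over the finitely many pairs, which is immediate.
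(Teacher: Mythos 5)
Your proposal is correct and follows essentially the same construction as the paper: spread the $d$ coordinate-encoding points along the diagonal with large horizontal separation and large persistence, verify the coordinatewise matching has cost $\|v_k - v_l\|_p$, and rule out all other partial matchings by showing any unmatched point or cross-match costs more than the diameter of the finite set. The paper simply makes your constants explicit (it takes $C = 2c$ and persistence roughly $4c$ for a single $c$ exceeding both $\max_i\|a^i\|_\oo$ and $\max_{i,j}\|a^i - a^j\|_p$), so the two arguments coincide.
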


\begin{proof}
Let $A = \{ a^1, \dots, a^n \}$ be a finite subset of $\R^d$. Let 
\[
c > \max\{\| a^i \|_{\oo}, \| a^i - a^j \|_p\ |\ 1 \leq i,j \leq n\}.
\]
and consider the following map.
\begin{align*}
\PHI: A &\to \DGM{p} \\
a^i &\mapsto D_i : [d] \to \HP,\ k \mapsto \{(2c(k-1), 2c(k+1) + a^i_k)\}_{k=1}^{d}
\end{align*}

Formally, $\PHI(a^i)$ and $\PHI(a^j)$ are equivalence classes of persistence diagrams $D_i : [d] \to \HP$ and $D_j: [d] \to \HP$. Consider the partial matching $([d], [d], \ID_{[d]})$ between $D_i$ and $D_j$, i.e. $(2c(k-1), 2c(k+1) + a^i_k)$ is matched with $(2c(k-1), 2c(k+1) + a^j_k)$ for every $k \in [d]$. Observe that $\|D_i(k) - D_j(k)\|_{\oo} = |a^i_k - a^j_k|$ for every $k$, so the cost of this partial matching is $(\sum \limits_{k=1}^d |a^i_k - a^j_k|^p)^{\frac{1}{p}} = \| a^i - a^j \|_p$.

Suppose $I, J \subseteq [d]$ and $(I, J, f)$ is a different partial matching between $D_i$ and $D_j$. Then there exists a $k \in [d]$ such that either $k \notin I$ or $k \in I$ and $f(k) \neq k$. If $k \notin I$, then
\[
\C_p(f) \geq \frac{D_i(k)_y - D_i(k)_x}{2} = 2c + \frac{a^i_k}{2} > \frac{3c}{2}.
\]
If $k \in I$ and $f(k) = k' \neq k$, then
\[
\C_p(f) \geq \|(2c(k-1), 2ck + a^i_k) - (2c(k'-1), 2ck' + a^j_{k'})\|_{\oo} \geq 2c.
\]
Therefore, $\C_p(f) > c > \| a^i - a^j \|_p$. Hence, $([d], [d], \ID_{[d]})$ is the optimal partial matching and $w_p(\PHI(a^i), \PHI(a^j)) = \| a^i - a^j \|_p$, i.e. $\PHI$ is an isometric embedding.
	\end{proof}

\begin{theorem}
	$\DGM{p}$ does not coarsely embed into a Hilbert space for $2 < p < \oo$.
\end{theorem}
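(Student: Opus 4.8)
The plan is to derive a contradiction from the assumption that $\DGM{p}$ coarsely embeds into a Hilbert space, transporting the non-embeddability of $\ell_p$ (Corollary \ref{cor:lp>2}) backwards through the isometric embeddings supplied by Lemma \ref{lem:iso}. The engine is Nowak's characterization (Theorem \ref{thm:nowak}): coarse embeddability of a metric space is equivalent to the existence of a \emph{single} pair of distortion functions $\rho_-, \rho_+$ that works simultaneously for every finite subset. This uniformity across finite subsets is precisely what allows information to pass between the two spaces.

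First I would assume, for contradiction, that $\DGM{p}$ admits a coarse embedding into a Hilbert space. By Theorem \ref{thm:nowak}, there exist non-decreasing functions $\rho_-, \rho_+ : [0, \oo) \to [0, \oo)$ with $\lim_{t \to \oo} \rho_-(t) = \oo$ such that every finite subset $B \subseteq \DGM{p}$ admits a map $f_B : B \to \ell_2$ satisfying $\rho_-(w_p(x,y)) \leq \|f_B(x) - f_B(y)\|_2 \leq \rho_+(w_p(x,y))$ for all $x, y \in B$.

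Next I would show that these \emph{same} functions $\rho_-, \rho_+$ witness coarse embeddability of $\ell_p$. Given an arbitrary finite subset $A \subseteq \ell_p$, note that the finitely many vectors of $A$ have collectively finitely many nonzero coordinates, so $A$ lies inside some $(\R^d, \|\cdot\|_p)$. Lemma \ref{lem:iso} furnishes an isometric embedding $\PHI : A \to \DGM{p}$, whose image $\PHI(A)$ is a finite subset of $\DGM{p}$. Composing with $f_{\PHI(A)}$ yields a map $g_A := f_{\PHI(A)} \circ \PHI : A \to \ell_2$, and since $\PHI$ is an isometry we have $w_p(\PHI(x), \PHI(y)) = \|x - y\|_p$, so $g_A$ inherits the bounds $\rho_-(\|x-y\|_p) \leq \|g_A(x) - g_A(y)\|_2 \leq \rho_+(\|x-y\|_p)$ for all $x, y \in A$. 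Because $A$ was an arbitrary finite subset of $\ell_p$, the pair $(\rho_-, \rho_+)$ satisfies the hypotheses of Theorem \ref{thm:nowak} for $\ell_p$, so $\ell_p$ coarsely embeds into a Hilbert space, contradicting Corollary \ref{cor:lp>2}.

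There is no genuinely hard computational step here; the entire argument rests on the observation that Nowak's finite-subset criterion uses distortion functions independent of the chosen subset, which makes coarse non-embeddability transfer along isometric embeddings of finite subsets. The only point demanding care is the direction in which Theorem \ref{thm:nowak} is invoked: first its ``only if'' direction to extract $(\rho_-, \rho_+)$ from the hypothesized embedding of $\DGM{p}$, and then its ``if'' direction to conclude embeddability of $\ell_p$. One should also confirm that each finite $A \subseteq \ell_p$ really does sit inside some $(\R^d, \|\cdot\|_p)$ so that Lemma \ref{lem:iso} applies, which is immediate as noted above.
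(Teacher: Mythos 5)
Your overall strategy is the same as the paper's: extract uniform distortion functions from the hypothesized coarse embedding of $\DGM{p}$ via Theorem \ref{thm:nowak}, push finite subsets of $\ell_p$ into $\DGM{p}$ isometrically via Lemma \ref{lem:iso}, and contradict Corollary \ref{cor:lp>2}. However, there is a genuine gap at the step you dismiss as immediate: you claim that a finite subset $A \subseteq \ell_p$ has ``collectively finitely many nonzero coordinates'' and hence lies in some $(\R^d, \|\cdot\|_p)$. This is false. An element of $\ell_p$ is an infinite sequence with $p$-summable entries, and such a sequence can have infinitely many nonzero coordinates (e.g. $x_n = 1/n$ for $p > 1$). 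So a finite subset of $\ell_p$ need not sit inside any finite-dimensional coordinate subspace, and Lemma \ref{lem:iso} does not apply to $A$ directly.

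This is precisely the difficulty the paper's proof is built to handle. The paper truncates: it chooses $m$ so large that the tails $\rho_m(a^i) = (0,\dots,0,a^i_{m+1},\dots)$ all have $p$-norm at most $\tfrac12$, applies Lemma \ref{lem:iso} to the truncated set $\pi_m(A) \subseteq \R^m$, and absorbs the resulting additive error of at most $1$ in the distances by replacing $\rho_-$ with $\tilde{\rho}_-(t) = \rho_-(\max(t-1,0))$, which is still non-decreasing and tends to $\oo$. (Theorem \ref{thm:nowak} only requires \emph{some} admissible pair of distortion functions for $\ell_p$, so this modification is harmless.) Your argument can be repaired by inserting exactly this approximation step --- or, equivalently, by first embedding the finitely supported sequences and then arguing that coarse embeddability passes from a dense subset to all of $\ell_p$, which amounts to the same perturbation. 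As written, though, the proof does not go through.
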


\begin{proof}
Fix $p > 2$ and suppose $\DGM{p}$ admits a coarse embedding into a Hilbert space. Then by Theorem \ref{thm:nowak}, there exist non-decreasing functions $\rho_-, \rho_+: [0, \oo) \to [0, \oo)$ such that $\lim \limits_{t \to \oo} \rho_-(t) = \oo$ and for every finite subset $S \subseteq \DGM{p}$ there exists a map $f_S: S \to \ell_2$ satisfying
\begin{equation}
\label{eq:ce}
\rho_-(w_p(x,y)) \leq \|f_S(x) - f_S(y) \|_2 \leq \rho_+(w_p(x,y))
\end{equation}
for every $x, y \in S$. Define $\tilde{\rho}_-: [0, \oo) \to [0, \oo)$ by $\tilde{\rho}_-(t) = \rho_-(\max(t-1, 0))$ and observe that $\tilde{\rho}_-$ is non-decreasing and $\lim \limits_{t \to \oo} \tilde{\rho}_-(t) = \oo$. We will show that for any finite subset $A \subseteq \ell_p$ there exists a map $g_A: A \to \ell_2$ satisfying 
\begin{equation}
\label{eq:g}
\tilde{\rho}_-(\|x - y\|_p) \leq \|g_A(x) - g_A(y) \|_2 \leq \rho_+(\|x - y\|_p)
\end{equation}
for every $x,y \in A$. This implies by Theorem \ref{thm:nowak} that $\ell_p$ coarsely embeds into a Hilbert space which contradicts Corollary \ref{cor:lp>2}.

For any $m \in \mathbb{N}$, define $\pi_m, \rho_m: \ell_p \to \ell_p$ by $\pi_m(x) = (x_1, \dots, x_m, 0, 0, \dots)$ and $\rho_m(x) = (0, \dots, 0, x_{m+1}, x_{m+2}, \dots)$. Let $A = \{a^1, \dots, a^n\}$ be a finite subset of $\ell_p$ and choose $m \in \mathbb{N}$ sufficiently large such that $\| \rho_m(a^i) \|_p \leq \frac{1}{2}$ for every $i = 1, \dots, n$. Then
\begin{align*}
\| a^i - a^j \|_p \geq \| \pi_m(a^i) - \pi_m(a^j) \|_p &\geq \|a^i - a^j\|_p - \| \rho_m(a^i) - \rho_m(a^j) \|_p \\
&\geq \|a^i - a^j \|_p - (\| \rho_m(a^i) \|_p + \| \rho_m(a^j) \|_p) \\
&\geq \|a^i - a^j \|_p - 1
\end{align*}

Since $\pi_m(A)$ is isometric to a finite subset of $(\R^m, \| \cdot \|_p)$, there exists an isometric embedding $\PHI: \pi_m(A) \to \DGM{p}$ by Lemma \ref{lem:iso}. Let $S = \PHI(\pi_m(A))$ and $f_S$ be as in \eqref{eq:ce}. Define $g_A: A \to \ell_2$ by $g_A = f_S \circ \PHI \circ \pi_m$. The following two series of inequalities show $g_A$ satisfies \eqref{eq:g}. In both cases, the first inequality follows from \eqref{eq:ce}, the equality follows from $\PHI$ being an isometric embedding, and the second inequality follows from the monotonicity of $\rho_+$ and $\rho_-$, respectively.
\begin{align*}
\|g_A(a^i) - g_A(a^j) \|_2 \leq \rho_+(w_p(\PHI \pi_m (a^i), \PHI \pi_m (a^j)))  = \rho_+(\|\pi_m (a^i) - \pi_m (a^j)\|_p) \leq \rho_+(\|a^i - a^j \|_p) \\
\|g_A(a^i) - g_A(a^j) \|_2 \geq \rho_-(w_p(\PHI \pi_m (a^i), \PHI \pi_m (a^j)))  = \rho_-(\|\pi_m (a^i) - \pi_m (a^j)\|_p) \geq \tilde{\rho}_-(\|a^i - a^j \|_p)
\end{align*}

\end{proof}

\end{document}